\documentclass[a4,12pt]{amsart}
%%%%%%%%%%%%%%%%%%%%%%%%%%%%%%%%%%%%%%%%%%%%%%%%%%%%%%%%
\oddsidemargin 0mm
\evensidemargin 0mm
\topmargin 0mm
\textwidth 160mm
\textheight 230mm
%\tolerance=9999
%%%%%%%%%%%%%%%%%%%%%%%%%%%%%%%%%%%%%%%%%%%%%%%%%%%%%%%%
\usepackage{amssymb,amstext,amsmath,amscd,amsthm,amsfonts,enumerate,graphicx,latexsym}
\usepackage[usenames]{color}
%\usepackage{showkeys}
%\usepackage[all]{xy}
%%%%%%%%%%%%%%%%%%%%%%%%%%%%%%%%%%%%%%%%%%%%%%%%%%%%%%%%
\newtheorem{thm}{Theorem}[section]
\newtheorem{cor}[thm]{Corollary}
\newtheorem{lem}[thm]{Lemma}
\newtheorem{prop}[thm]{Proposition}
%%%%%%%%%%%%%%%%%%%%%%%%%%%%%%%%%%%%%%%%%%%%%%%%%%%%%%%%%%%%%%%%%
\theoremstyle{definition}
\newtheorem{dfn}[thm]{Definition}
\newtheorem{rem}[thm]{Remark}

\newtheorem*{claim*}{Claim}
\theoremstyle{remark}
\newtheorem*{ac}{Acknowlegments}
%%%%%%%%%%%%%%%%%%%%%%%%%%%%%%%%%%%%%%%%%%%%%%%%%%%%%%%%%%%%%%%%%

\numberwithin{equation}{thm}
%%%%%%%%%%%%%%%%%%%%%%%%%%%%%%%%%%%%%%%%%%%%%%%%%%%%%%%%%%%%%%%%%
\def\height{\operatorname{\mathsf{ht}}}
\def\p{\mathfrak{p}}
\def\nf{\operatorname{\mathsf{NF}}}
\def\ipd{\operatorname{\mathsf{IPD}}}
\def\spec{\operatorname{\mathsf{Spec}}}
\def\supp{\operatorname{\mathsf{Supp}}}
\def\sing{\operatorname{\mathsf{Sing}}}
\def\R{\mathsf{R}}
\def\Ext{\operatorname{\mathsf{Ext}}}
\def\v{\operatorname{\mathsf{V}}}
\def\xx{\text{\boldmath{$x$}}}
\def\syz{\mathsf{\Omega}}
\def\codim{\operatorname{\mathsf{codim}}}
\def\tr{\mathsf{Tr}}
\def\depth{\operatorname{\mathsf{depth}}}
\def\dim{\operatorname{\mathsf{dim}}}
\def\pd{\operatorname{\mathsf{pd}}}
\def\lcm{\operatorname{\mathsf{\underline{MCM}}}}
\def\Hom{\operatorname{\mathsf{Hom}}}
%%%%%%%%%%%%%%%%%%%%%%%%%%%%%%%%%%%%%%%%%%%%%%%%%%%%%%%%%%%%%%%%%
\begin{document}
\allowdisplaybreaks
\setlength{\baselineskip}{15pt}
\title[MCM approximations and Serre's condition]{Maximal Cohen--Macaulay approximations\\
and Serre's condition}
\date{September 4, 2014}
\author{Hiroki Matsui}
\address{Graduate School of Mathematics, Nagoya University, Furocho, Chikusaku, Nagoya, Aichi 464-8602, Japan}
\email{m14037f@math.nagoya-u.ac.jp}
\author{Ryo Takahashi}
\address{Graduate School of Mathematics, Nagoya University, Furocho, Chikusaku, Nagoya, Aichi 464-8602, Japan}
\email{takahashi@math.nagoya-u.ac.jp}
\urladdr{http://www.math.nagoya-u.ac.jp/~takahashi/}
\thanks{2010 {\em Mathematics Subject Classification.} 13C14, 13H10}
\thanks{{\em Key words and phrases.} Cohen--Macaulay ring, Gorenstein ring, maximal Cohen--Macaulay module, maximal Cohen--Macaulay approximation, Serre's condition, nonfree locus, singular locus, syzygy, transpose}
\thanks{RT was partly supported by JSPS Grant-in-Aid for Scientific Research (C) 25400038}
\dedicatory{Dedicated to Professor Ngo Viet Trung on the occasion of his sixtieth birthday}
\begin{abstract}
This paper studies the relationship between Serre's condition $(\R_n)$ and Auslander--Buchweitz's maximal Cohen--Macaulay approximations.
It is proved that a Gorenstein local ring satisfies $(\R_n)$ if and only if every maximal Cohen--Macaulay module is a direct summand of a maximal Cohen--Macaulay approximation of a (Cohen--Macaulay) module of codimension $n+1$.
\end{abstract}
\maketitle
%\tableofcontents
%%%%%%%%%%%%%%%%%%%%%%%%%%%%%%%%%%%%%%%%%%%%%%%%%%%%%%%%
\section{Introduction}

In the 1980s, Auslander and Buchweitz \cite{AB} introduced the notion of a maximal Cohen--Macaulay approximation of a finitely generated module over a Cohen--Macaulay local ring with a canonical module, which has been playing a fundamental role in the representation theory of Cohen--Macaulay rings.
Several years ago Kato \cite{K} gave the following characterization theorem of Gorenstein local rings by maximal Cohen--Macaulay approximations.
We abbreviate Cohen--Macaulay to CM and maximal Cohen--Macaulay to MCM.

\begin{thm}[Kato]
Let $R$ be a $d$-dimensional Gorenstein local ring.
\begin{enumerate}[\rm(1)]
\item
The following are equivalent for $d\ge1$.
\begin{enumerate}[\rm(a)]
\item
$R$ is a domain.
\item
Every MCM $R$-module is a MCM approximation of a (CM) $R$-module of codimension $1$.
\end{enumerate}
\item
The following are equivalent for $d\ge2$.
\begin{enumerate}[\rm(a)]
\item
$R$ is a unique factorization domain.
\item
Every MCM $R$-module is a MCM approximation of a (CM) $R$-module of codimension $2$.
\end{enumerate}
\end{enumerate}
\end{thm}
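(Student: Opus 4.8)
The plan is to read off both equivalences from the structure of the Auslander--Buchweitz sequence, with close attention to the projective dimension of the kernel. Recall that for a finitely generated module $M$ over a $d$-dimensional Gorenstein local ring an MCM approximation is a short exact sequence $0\to Y\to X\to M\to 0$ with $X$ MCM and $Y$ of finite injective --- equivalently, over a Gorenstein ring, finite projective --- dimension, and that when $M$ is CM of codimension $c$ one may take $\pd_R Y\le c-1$ (non-minimal approximations only add free summands to $X$ and $Y$). The basic tool I would isolate is a localization principle: if such an $X$ approximates a CM module $M$ of codimension $c$, then $M_\p=0$ for every prime $\p$ with $\height\p<c$, so $X_\p\cong Y_\p$ is simultaneously MCM and of finite projective dimension over $R_\p$, hence free by Auslander--Buchsbaum; thus $\nf(X)\subseteq\{\p:\height\p\ge c\}$. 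When $c=1$ this says $Y$ itself is free and $X$ is free at every minimal prime; moreover the free rank of $X_\p$ at a minimal prime $\p$, read off as $\sum_i(-1)^i\operatorname{rank}F_i$ from a finite free resolution $F_\bullet$ of $Y$ via lengths over the Artinian ring $R_\p$, is independent of $\p$.

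For part (1), $(a)\Rightarrow(b)$: let $X$ be a nonzero MCM module over the Gorenstein domain $R$. Being MCM over a Gorenstein ring it is reflexive, hence torsion free, of rank $r\ge1$; picking $r$ elements of $X$ forming a basis of $X\otimes_R\operatorname{Frac}(R)$ gives an injection $R^r\hookrightarrow X$ with torsion cokernel $M$, which is nonzero unless $X$ is free (a case dealt with by $0\to R^n\xrightarrow{b}R^n\to(R/b)^n\to0$ for a nonunit nonzerodivisor $b$). The point is that $M$ is automatically CM of codimension $1$: the depth lemma applied to $0\to R^r\to X\to M\to 0$ gives $\depth_R M\ge d-1$, while $M$ torsion over a domain forces $\dim_R M\le d-1$, so $\depth_R M=\dim_R M=d-1$; and since $R^r$ is free, $\Ext^i_R(C,R^r)=0$ for all $i>0$ and all MCM $C$, so $X\to M$ is indeed an MCM approximation. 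Conversely, $(b)\Rightarrow(a)$: assume every MCM module arises this way and suppose $R$ is not a domain; pick a minimal prime $\p_1$ and set $X=\syz^d(R/\p_1)$, an MCM module. If $R_{\p_1}$ is not a field then $(R/\p_1)_{\p_1}=k(\p_1)$ has infinite projective dimension over $R_{\p_1}$, so $X_{\p_1}$ is not free, contradicting the localization principle. Otherwise $R$ has a second minimal prime $\p_2$; localizing the minimal free resolution of $R/\p_1$ at $\p_1$ and at $\p_2$ gives exact complexes that agree except for the extra homology term $k(\p_1)$ at $\p_1$, so the free ranks of $X$ at $\p_1$ and $\p_2$ differ by $(-1)^d\ne0$, contradicting the constancy of that rank. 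Hence $R$ is a domain.

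For part (2), $(b)\Rightarrow(a)$: First, the argument of part (1), $(b)\Rightarrow(a)$, applies verbatim (an MCM approximation of a CM module of codimension $\ge1$ is free at every minimal prime of constant rank), so $R$ is a domain. If some height-one prime $\p$ had $R_\p$ singular, then $\syz^d(R/\p)$ would be an MCM module with $\p\in\nf(\syz^d(R/\p))$, contradicting the localization principle with $c=2$; so $R$ satisfies $(\R_1)$ and, being CM, is normal. For the rest I would bring in the divisor class $\mathrm{c}_1(-)\in\operatorname{Cl}(R)$, which is additive on short exact sequences and vanishes both on modules of projective dimension $\le1$ (resolve by two free modules) and on modules supported in codimension $\ge2$: applied to an approximation $0\to Y\to X\to M\to0$ with $\pd_R Y\le1$ and $M$ CM of codimension $2$, this gives $\mathrm{c}_1(X)=0$. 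If $\operatorname{Cl}(R)\ne0$, pick a divisorial ideal $I$ with $[I]\ne0$; then $X:=\syz^{d-2}(I)$ (just $I$ when $d=2$) is MCM --- $I$ is reflexive, hence $(S_2)$, hence $\depth_R I\ge\min\{2,d\}$ --- and $\mathrm{c}_1(X)=(-1)^{d-2}[I]\ne0$, a contradiction. So $R$ is a normal domain with trivial class group, i.e.\ a UFD.

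The implication $(a)\Rightarrow(b)$ of part (2) is, I expect, the main obstacle: the codimension-one device breaks down, since $X$ modulo a free submodule has codimension at most $1$. Instead I would dualize a hypothetical approximation $0\to Y\to X\to M\to0$ (with $\pd_R Y\le1$ and $M$ CM of codimension $2$): applying $\Hom_R(-,R)$ and using $\Hom_R(M,R)=0=\Ext^1_R(M,R)$ together with $\Ext^i_R(X,R)=0$ for $i>0$ yields $X^*\cong Y^*$ and $\Ext^1_R(Y,R)\cong\Ext^2_R(M,R)=:M^\dagger$, and then a length-one free resolution $0\to F_1\to F_0\to Y\to0$ produces an exact sequence $0\to Y^*\to F_0^*\to F_1^*\to M^\dagger\to0$ identifying $X^*$, up to free summands, with the second syzygy $\syz^2_R(M^\dagger)$ of a CM module $M^\dagger$ of codimension $2$. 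So over a Gorenstein UFD the task reduces to realizing an arbitrary MCM module $N$ (take $N=X^*$) as the second syzygy of a CM module of codimension $2$: embed $N$ into a free module $G_1$ so that $Q:=G_1/N$ is torsion free with $Q^{**}$ free, put $M^\dagger:=Q^{**}/Q$, which a depth-lemma count forces to be CM of codimension $2$, so that $\syz^2_R(M^\dagger)\cong N$ up to free summands; one then recovers $M:=\Ext^2_R(M^\dagger,R)$ and checks, by dualizing the resolution of $M^\dagger$, that $0\to\tr M^\dagger\to X\to M\to0$ is an MCM approximation (here $\pd_R(\tr M^\dagger)\le1$ because $\Hom_R(M^\dagger,R)=0$). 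Carrying out this realization --- concretely, producing the embedding of $N$ with a well-behaved free cokernel, where triviality of the class group is what is used in an essential way --- is the technical heart of the argument.
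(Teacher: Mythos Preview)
The paper does not contain a proof of this statement: Kato's theorem is quoted in the introduction as motivation, with attribution to \cite{K}, and the paper's own contributions (Theorems \ref{4131833} and \ref{4140050}) treat only the weaker ``direct summand'' variants. There is therefore nothing in the paper to compare your attempt against; any comparison would have to be with Kato's original article.

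On the substance of your attempt: part (1) is correct in both directions. The depth-lemma computation showing $X/R^r$ is CM of codimension exactly $1$ is right, and for $(b)\Rightarrow(a)$ the localization principle together with the rank count at two minimal primes (difference $(-1)^d$) does the job. For part (2), your $(b)\Rightarrow(a)$ via the first Chern class in $\operatorname{Cl}(R)$ is also correct. For $(a)\Rightarrow(b)$ in part (2), the dualization that reduces the problem to realizing an arbitrary MCM module $N$ as $\syz^2$ of a codimension-$2$ CM module is the right reformulation, and the condition you isolate on $Q=G_1/N$ (torsion-free with $Q^{**}$ free) is exactly what is needed. But you have not actually produced such an embedding, and the obvious candidates fail: for instance, dualizing a free cover of $N^*$ gives $0\to N\to G_1\to Q\to 0$ with $Q$ MCM, hence $Q^{**}=Q$, which is free only when $N$ already is. The missing ingredient is a Bourbaki-type sequence for $N^*$ combined with a general-position argument forcing the cokernel into codimension $\ge 2$, with the UFD hypothesis entering by making the rank-one quotient principal; a further adjustment is then needed to obtain CM of codimension exactly $2$. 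You have correctly located the technical heart but not carried it out.
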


It is natural to ask what happens if in the statements (b) of the above theorem 
we weaken the condition of being a MCM approximation to that of being a direct summand of a MCM approximation.
The main purpose of this paper is to answer this question in more general settings.
Our main results yield the following theorem.

\begin{thm}\label{4140048}
Let $R$ be a $d$-dimensional Gorenstein local ring.
The following are equivalent for each $0\le c\le d$.
\begin{enumerate}[\rm(1)]
\item
$R$ satisfies Serre's condition $(\R_{c-1})$.
\item
Every MCM $R$-module is a direct summand of a $c$-th syzygy of a (CM) $R$-module of codimension $c$.
\item
Every MCM $R$-module is a direct summand of a MCM approximation of a (CM) $R$-module of codimension $c$.
\end{enumerate}
\end{thm}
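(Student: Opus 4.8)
The plan is to run the cycle $(3)\Rightarrow(1)$, $(1)\Rightarrow(2)$, $(2)\Rightarrow(3)$, obtaining $(2)\Rightarrow(1)$ from the same computation as $(3)\Rightarrow(1)$. Two facts are used throughout: over the Gorenstein ring $R$ a module is MCM iff it is totally reflexive, and the $c$-th syzygy of a Cohen--Macaulay module of codimension $c$ is MCM (the depth estimate for syzygies forces its depth to be $d$); also, localizations of MCM modules are MCM.

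\emph{The implications $(2)\Rightarrow(1)$ and $(3)\Rightarrow(1)$ by localization.} Suppose $(1)$ fails, say $R_\p$ is not regular with $\height\p\le c-1$. Then $M:=\syz^d_R(R/\p)$ is MCM and $M_\p$ is not free, since it has $\syz^d_{R_\p}(k(\p))$ as a direct summand and $\pd_{R_\p}k(\p)=\infty$. If $N$ is Cohen--Macaulay of codimension $c$, every associated prime of $N$ has height $c$, so $\height\p<c$ forces $N_\p=0$; hence $(\syz^c_R N)_\p$ is a syzygy of $0$ over $R_\p$, thus free, and in any MCM approximation $0\to Y\to X\to N\to0$ one gets $X_\p\cong Y_\p$, which is MCM of finite projective dimension over $R_\p$, hence free. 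In either case $M$ being a direct summand would force $M_\p$ free, a contradiction.

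\emph{The equivalence $(2)\Leftrightarrow(3)$ by duality.} For $N$ Cohen--Macaulay of codimension $c$ put $N^{\dagger}=\Ext^c_R(N,R)$; this is again Cohen--Macaulay of codimension $c$, with $(N^{\dagger})^{\dagger}\cong N$ and $\Ext^i_R(N,R)=0$ for $i\ne c$. Applying $\Hom_R(-,R)$ to a truncated free resolution $0\to\syz^c_R(N^{\dagger})\to G_{c-1}\to\cdots\to G_0\to N^{\dagger}\to0$, whose leftmost term is MCM (hence totally reflexive), and using $\Ext^i_R(N^{\dagger},R)=0$ for $i\ne c$, yields an exact sequence $0\to G_0^{*}\to\cdots\to G_{c-1}^{*}\to(\syz^c_R N^{\dagger})^{*}\to N\to0$. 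Reading off the kernel of $(\syz^c_R N^{\dagger})^{*}\to N$ exhibits this as a MCM approximation of $N$ whose finite-projective-dimension part has projective dimension at most $c-1$; so the middle term of the (essentially unique) MCM approximation of $N$ is $(\syz^c_R N^{\dagger})^{*}$ up to free summands. Since $(-)^{*}$ is a duality on MCM modules and $N\mapsto N^{\dagger}$ an involution on Cohen--Macaulay modules of codimension $c$, a MCM module $M$ is a direct summand of a MCM approximation of such an $N$ iff $M^{*}$ is a direct summand of $\syz^c_R$ of such an $N$; as $M\mapsto M^{*}$ is a bijection on isomorphism classes of MCM modules, $(2)$ and $(3)$ are equivalent.

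\emph{The implication $(1)\Rightarrow(2)$, the main point.} Here $(1)$ is rephrased: it is equivalent to "$\sing R$ has codimension $\ge c$'', which in turn is equivalent to "every MCM module $M$ has non-free locus of codimension $\ge c$'' (one inclusion is automatic since $M_\p$ is free whenever $R_\p$ is regular; the other uses that $\sing R$ is the union of the non-free loci of the modules $\syz^d_R(R/\p)$). So let $M$ be MCM with non-free locus of codimension $\ge c$. After replacing $R$ by $R[t]_{\mathfrak m R[t]}$ we may assume the residue field infinite, and then choose an $R$-regular sequence $x_1,\dots,x_c$ inside the defining ideal of the non-free locus which is also regular on $M$ and on $M^{*}$. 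One then produces, out of $M$ (suitably enlarged by MCM and free summands so that generic ranks become balanced), a Cohen--Macaulay module $L$ of codimension $c$ supported on the non-free locus together with a splitting of $M$ off $\syz^c_R L$; the role of the regular sequence supported on the non-free locus is precisely to force each successive cokernel in the construction to drop one in dimension and to remain Cohen--Macaulay. I expect this to be the main obstacle: the depth of a cokernel $F/M$ is only bounded below by $d-1$, so reaching codimension $c$ requires iterating $c$ times while controlling dimension, and one must guarantee that $M$ is a genuine direct summand — not merely a subquotient, which is what a first attempt such as $L=M/(x_1,\dots,x_c)M$ produces — of the final $c$-th syzygy; this is where the rank-balancing, the generic position of the $x_i$, and a cancellation argument over the local ring $R$ enter. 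Together with $(2)\Leftrightarrow(3)$ and $(2)\Rightarrow(1)$, this closes the cycle.
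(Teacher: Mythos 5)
Your reductions $(2)\Rightarrow(1)$ and $(3)\Rightarrow(1)$ by localizing at a prime of height at most $c-1$ are correct and essentially the paper's argument, and your $(2)\Leftrightarrow(3)$ via $N\mapsto\Ext_R^c(N,R)$ together with the identification of the MCM approximation of $N$ as $(\syz^c\Ext_R^c(N,R))^*$ up to free summands is a sound variant of the paper's use of $\tr\syz^c\tr\syz^c$. The problem is $(1)\Rightarrow(2)$, which you only sketch and which is where all the content of the theorem lies; as written, the proposal does not prove it.

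Concretely: you correctly reduce to showing that a MCM module $M$ with $\codim\nf(M)\ge c$ is a direct summand of $\syz^cN$ for some CM module $N$ of codimension $c$, and you correctly guess the candidate $N=M/\xx M$ for a regular sequence $\xx=x_1,\dots,x_c$ cutting out the nonfree locus --- but you then discard it as yielding ``merely a subquotient'' and appeal to unspecified rank-balancing, generic position and cancellation. That is not the mechanism, and nothing in your sketch forces the required splitting. The missing ingredient is the choice of the ideal: by \cite[Remark 5.2(1)]{dim} there is an ideal $I$ with $\v(I)=\nf(M)$ such that $I\cdot\Ext_R^i(M,X)=0$ for all $i>0$ and all $X$; this is a genuinely nontrivial input (a cohomology-annihilator statement), not something recoverable from ``$\xx$ lies in the defining ideal of $\nf(M)$.'' For an $M$-regular $x\in I$, the exact sequence $0\to\syz M\to\syz(M/xM)\to M\to0$ obtained from $0\to M\xrightarrow{\ x\ }M\to M/xM\to0$ has class $x\cdot\theta\in\Ext_R^1(M,\syz M)$, where $\theta$ is the class of $0\to\syz M\to F\to M\to0$; since $x$ kills this Ext module the sequence splits, so $M$ is a direct summand of $\syz(M/xM)$, and iterating over an $R$-regular (hence $M$-regular, as $M$ is MCM) sequence $\xx\subseteq I$ of length $c$ gives $M$ as a direct summand of $\syz^c(M/\xx M)$ (this is \cite[Proposition 2.2]{stcm}). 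Such a sequence exists because $\height I=\codim\nf(M)\ge c$, and $M/\xx M$ is then CM of codimension exactly $c$. Without the annihilation property of $I$ the splitting fails, and without $\v(I)=\nf(M)$ one cannot guarantee $\height I\ge c$; your sketch supplies neither, so this implication remains unproved.
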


Letting $c=1,2$ in the above theorem, we obtain the following result which is analogous to Kato's theorem.
This gives the answer to the question raised above.

\begin{cor}
Let $R$ be a $d$-dimensional Gorenstein local ring.
\begin{enumerate}[\rm(1)]
\item
The following are equivalent for $d\ge1$.
\begin{enumerate}[\rm(a)]
\item
$R$ is reduced.
\item
Every MCM $R$-module is a direct summand of a MCM approximation of a (CM) $R$-module of codimension $1$.
\end{enumerate}
\item
The following are equivalent for $d\ge2$.
\begin{enumerate}[\rm(a)]
\item
$R$ is normal.
\item
Every MCM $R$-module is a direct summand of a MCM approximation of a (CM) $R$-module of codimension $2$.
\end{enumerate}
\end{enumerate}
\end{cor}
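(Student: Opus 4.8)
The corollary is a direct translation of Theorem~\ref{4140048} in the two special cases $c=1$ and $c=2$, so the plan is simply to identify Serre's condition $(\R_{c-1})$ with the classical notions of being reduced, respectively normal, for a Gorenstein local ring. The only extra input beyond Theorem~\ref{4140048} is Serre's criteria: a Noetherian ring is reduced if and only if it satisfies $(\R_0)$ and $(\mathsf{S}_1)$, and is normal if and only if it satisfies $(\R_1)$ and $(\mathsf{S}_2)$; together with the elementary fact that a Gorenstein (indeed Cohen--Macaulay) local ring satisfies $(\mathsf{S}_n)$ for every $n$.

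For part (1), assume $d\ge1$. Then $c=1$ lies in the range $0\le c\le d$, so Theorem~\ref{4140048} applies and shows that statement (b) --- every MCM $R$-module is a direct summand of a MCM approximation of a CM $R$-module of codimension $1$ --- is equivalent to $R$ satisfying $(\R_{0})$. Since $R$ is Gorenstein it is Cohen--Macaulay, hence satisfies $(\mathsf{S}_1)$; thus by Serre's criterion $R$ is reduced if and only if it satisfies $(\R_0)$. Concatenating the two equivalences yields (a)$\iff$(b).

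For part (2), assume $d\ge2$, so that $c=2$ again lies in $0\le c\le d$. Theorem~\ref{4140048} with $c=2$ shows that statement (b) is equivalent to $R$ satisfying $(\R_{1})$. As before, $R$ Gorenstein implies $R$ Cohen--Macaulay, hence $(\mathsf{S}_2)$ holds, and Serre's criterion gives that $R$ is normal if and only if it satisfies $(\R_1)$ (recall that a normal local ring is automatically a normal domain). Combining gives (a)$\iff$(b). There is essentially no obstacle here once Theorem~\ref{4140048} is available; the only points requiring care are verifying that $c$ falls in the admissible range $0\le c\le d$ under the stated hypotheses on $d$, and correctly invoking Serre's reducedness and normality criteria in conjunction with the Cohen--Macaulayness of the Gorenstein ring $R$.
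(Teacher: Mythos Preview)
Your argument is correct and matches the paper's approach exactly: the paper simply states that the corollary follows by ``letting $c=1,2$'' in Theorem~\ref{4140048}, and you have spelled out the (standard) identification of $(\R_0)$ with reducedness and $(\R_1)$ with normality for a Cohen--Macaulay ring via Serre's criteria.
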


This paper is organized as follows.
In Section 2, we consider over a CM local ring the condition that all MCM modules are direct summands of syzygies of certain modules.
In Section 3, we study over a Gorenstein local ring the condition that all MCM modules are direct summands of MCM approximations of certain modules.
The proof of Theorem \ref{4140048} is given at the end of this section.

%%%%%%%%%%%%%%%%%%%%%%%%%%%%%%%%%%%%%%%%%%%%%%%%%%%%%%%%
\section{MCM modules that are direct summands of syzygies}

Throughout this paper, let $R$ be a commutative Cohen--Macaulay local ring of Krull dimension $d$.
All $R$-modules are assumed to be finitely generated.

Let us begin with recalling some basic definitions.

\begin{dfn}
(1) For an integer $n\ge0$ we denote by $\syz^nM$ an {\em $n$-th syzygy} of $M$, that is, the image of the $n$-th differential map in a free resolution of $M$.\\
(2) For an integer $n\ge-1$ we say that $R$ satisfies {\em Serre's condition $(\R_n)$} if the local ring $R_\p$ is regular for all prime ideals $\p$ of $R$ with $\height\p\le n$.\\
(3) The {\em singular locus} $\sing R$ of $R$ is by definition the set of prime ideals $\p$ of $R$ such that the local ring $R_\p$ is nonregular.\\
(4) Let $M$ be an $R$-module.
The {\em nonfree locus} $\nf(M)$ (respectively, the {\em infinite projective dimension locus} $\ipd(M)$) of $M$ is defined as the set of prime ideals $\p$ of $R$ such that the $R_\p$-module $M_\p$ is nonfree (respectively, is of infinite projective dimension).\\
(5) Let $V$ be a closed subset of $\spec R$.
Then we set $\codim V=d-\dim V$ and call this the {\em codimension} of $V$.
The {\em codimension} $\codim M$ of an $R$-module $M$ is defined as the codimension of $\supp M$, whence $\codim M=d-\dim M$.
\end{dfn}

\begin{rem}
(1) If $X,Y$ are $n$-th syzygies of an $R$-module $M$, then $X\oplus F\cong Y\oplus G$ for some free $R$-modules $F,G$.\\
(2) By definition $R$ always satisfies $(\R_{-1})$.\\
(3) It is well-known and easy to see that the nonfree locus and the infinite projective dimension locus of an $R$-module are always closed subsets of $\spec R$ in the Zariski topology.\\
(4) If $M$ is a MCM $R$-module, then $\nf(M)$ is contained in $\sing R$.
\end{rem}

In the following proposition we study how to represent each MCM module as a direct summand of a syzygy of a certain CM module.
This result will become a basis of our main results.

\begin{prop}\label{4131834}
Let $M$ be a MCM $R$-module.
Then for each integer $0\le c\le \codim\nf(M)$ there exists a CM $R$-module $N$ such that
\begin{enumerate}[\rm(1)]
\item
$\codim N=c$,
\item
$\ipd(N) = \nf(M) $ and 
\item
$M$ is isomorphic to a direct summand of a $c$-th syzygy of $N$.
\end{enumerate}
\end{prop}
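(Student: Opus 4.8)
The plan is to realize $N$ as a quotient $M/\xx M$ of $M$ by a carefully chosen $R$-regular sequence $\xx=x_1,\dots,x_c$ lying in the radical of a defining ideal of $\nf(M)$; the more naive idea of producing a ``right free resolution'' $0\to M\to F^0\to\cdots\to F^{c-1}\to N\to0$ (which exists once $M$ is known to be a $c$-th syzygy) does not obviously let one control the codimension of the cokernel $N$. The case $c=0$ is immediate: take $N=M$, which is MCM and so CM of codimension $0$, satisfies $M=\syz^0M$, and has $\ipd(M)=\nf(M)$ because for $\p\in\supp M$ the localization $M_\p$ is MCM over $R_\p$ (localizations of MCM modules over a CM local ring remain MCM), so finite projective dimension forces freeness by the Auslander--Buchsbaum formula.

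Now let $c\ge1$ and write $\nf(M)=\v(I)$. Since $R$ is Cohen--Macaulay, $\height I=\codim\nf(M)\ge c$, so $\operatorname{grade}\sqrt I\ge c$; more generally, $\sqrt I$ still has grade $\ge c-(i-1)\ge1$ in $R/(x_1,\dots,x_{i-1})$ for $1\le i\le c$. I will choose $x_1,\dots,x_c$ recursively with $x_i=f_i^{m_i}$, where $f_i\in\sqrt I$ is a nonzerodivisor modulo $(x_1,\dots,x_{i-1})$ and $m_i$ is a large exponent to be fixed in the final step. Put $N=M/\xx M$. As $M$ is MCM and $\xx$ is $R$-regular, $\xx$ is $M$-regular and $N$ is Cohen--Macaulay with $\dim N=\dim M-c=d-c$, so $\codim N=c$, which is $(1)$. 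For $(2)$: after localizing at $\p$ one has $N_\p=0$ unless $\xx\subseteq\p$, while $\pd_{R_\p}N_\p=\pd_{R_\p}M_\p+c$ when $\xx\subseteq\p$ (since $\xx$ is then $M_\p$-regular); hence $\ipd(N)=\v(\xx)\cap\ipd(M)=\v(\xx)\cap\nf(M)$, and this equals $\nf(M)$ because $\xx\subseteq\sqrt I$ forces $\nf(M)=\v(I)\subseteq\v(\xx)$.

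It remains to prove $(3)$, which is the heart of the matter. Put $M_i=M/(x_1,\dots,x_i)M$ and $L_i=\syz^iM_i$, so that $M_0=M$, $M_c=N$, $L_0=M$, and $L_c$ is a $c$-th syzygy of $N$. I claim that $L_{i-1}$ is, up to a free summand, a direct summand of $L_i$ for $1\le i\le c$; chaining these and recalling that a $c$-th syzygy is well defined only up to free summands then shows that $M=L_0$ is a direct summand of a $c$-th syzygy of $N$. To prove the claim, apply a pullback argument to $0\to M_{i-1}\xrightarrow{x_i}M_{i-1}\to M_i\to0$: a first syzygy $\syz^1M_i$ of $M_i$ fits into a short exact sequence $0\to\syz^1M_{i-1}\to\syz^1M_i\to M_{i-1}\to0$ which is the pullback, along multiplication by $x_i$, of the canonical sequence $0\to\syz^1M_{i-1}\to(\text{free})\to M_{i-1}\to0$, hence has class $x_i\gamma_{i-1}$ in $\Ext_R^1(M_{i-1},\syz^1M_{i-1})$ with $\gamma_{i-1}$ the canonical class. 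Applying the $(i-1)$-st syzygy operation via the horseshoe lemma yields an exact sequence
\[
0\to\syz^1L_{i-1}\to L_i\oplus(\text{free})\to L_{i-1}\to0
\]
whose class is $x_i\zeta_{i-1}$ for some $\zeta_{i-1}\in\Ext_R^1(L_{i-1},\syz^1L_{i-1})$. That $\Ext$ is supported on $\nf(L_{i-1})$, and a localization computation of projective dimensions identifies $\nf(L_{i-1})=\nf\!\big(\syz^{i-1}(M/(x_1,\dots,x_{i-1})M)\big)=\v(x_1,\dots,x_{i-1})\cap\nf(M)=\nf(M)=\v(I)$, again using $\xx\subseteq\sqrt I$. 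Therefore $f_i\in\sqrt{\operatorname{ann}\zeta_{i-1}}$, so choosing $m_i$ with $x_i=f_i^{m_i}\in\operatorname{ann}\zeta_{i-1}$ makes the displayed sequence split, and $L_{i-1}$ becomes a direct summand of $L_i\oplus(\text{free})$, as claimed.

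The main obstacle is precisely the splitting of these successive extensions $0\to\syz^1L_{i-1}\to L_i\oplus(\text{free})\to L_{i-1}\to0$: it succeeds only because the class equals $x_i$ times a fixed class supported on $\nf(M)$ and, $R$ being Cohen--Macaulay, $\nf(M)$ has grade at least $c$, so the sequence $\xx$ can be kept inside $\sqrt I$ — indeed chosen to consist of high powers of nonzerodivisors there — all the way out to $i=c$ without obstruction. The remaining ingredients (the pullback description of $\syz^1(M_{i-1}/x_iM_{i-1})$, the $R$-linearity of the syzygy operation on $\Ext^1$, and the bookkeeping of the free summands produced by the iterated horseshoe lemma) are routine but need a little care.
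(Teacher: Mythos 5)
Your construction of $N$ coincides with the paper's: $N=M/\xx M$ for a length-$c$ regular sequence $\xx$ lying in (the radical of) a defining ideal $I$ of $\nf(M)$, and your verifications of conditions (1) and (2) are the same localization arguments the paper uses. The difference lies in condition (3), which the paper outsources to two citations: \cite[Remark 5.2(1)]{dim} provides an ideal $I$ with $\v(I)=\nf(M)$ annihilating $\Ext^i_R(M,X)$ for \emph{all} $i>0$ and all modules $X$, and \cite[Proposition 2.2]{stcm} then yields that $M$ is a summand of $\syz^c(M/\xx M)$ for any $M$-regular sequence in $I$. You avoid the first, nontrivial, input altogether: you use only that $\nf(M)$ is closed and that the finitely generated module $\Ext^1_R(L_{i-1},\syz^1L_{i-1})$ is supported on $\nf(L_{i-1})\subseteq\v(I)$, compensating by choosing each $x_i=f_i^{m_i}$ adaptively, with the exponent fixed after the obstruction class $\zeta_{i-1}$ (which depends only on $x_1,\dots,x_{i-1}$) is known — so there is no circularity. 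Two points are worth stressing. First, it is essential that you kill the image $\zeta_{i-1}$ of the canonical class under $\syz^{i-1}$ rather than $\gamma_{i-1}$ itself: $\Ext^1_R(M_{i-1},\syz^1M_{i-1})$ is supported on $\nf(M_{i-1})$, which for $i\ge2$ need not be contained in $\v(I)$ (it picks up points of $\v(x_1,\dots,x_{i-1})$ where $M$ is free), so powers of $f_i$ need not annihilate $\gamma_{i-1}$; your argument correctly passes to syzygies first, and this is exactly why it cannot be shortened. Second, your proof leans on the flagged-but-unproved compatibility that the horseshoe construction induces a well-defined $R$-linear map $\Ext^1_R(C,A)\to\Ext^1_R(\syz C,\syz A)$ taking the class of an extension to the class of its syzygy sequence; this is true and standard, since that map agrees up to sign with the composite of the connecting map $\Ext^1_R(C,A)\to\Ext^2_R(C,\syz A)$ coming from $0\to\syz A\to F\to A\to0$ and the inverse of the isomorphism $\Ext^1_R(\syz C,\syz A)\xrightarrow{\ \cong\ }\Ext^2_R(C,\syz A)$ coming from $0\to\syz C\to G\to C\to0$ with $G$ free, but it does require a diagram chase and should be recorded as a lemma. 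With that supplied, and with the free-summand bookkeeping (harmless, since the paper's syzygies are taken over arbitrary free resolutions, so $\syz^cN\oplus(\text{free})$ is again a $c$-th syzygy of $N$), your argument is complete; what it buys is independence from the annihilator theorem of \cite{dim}, at the cost of reproving by hand the splitting that \cite{stcm} provides.
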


\begin{proof}
By virtue of \cite[Remark 5.2(1)]{dim}, there exists an ideal $I$ of $R$ with $\nf(M)=\v(I)$ such that $I\cdot\Ext_R^i(M,X)=0$ for all integers $i>0$ and all $R$-modules $X$.
As
$$
\dim\nf(M)=\dim R/I=d-\height I,
$$
we have $\height I=\codim\nf(M)\ge c$, and can take an $R$-sequence $\xx=x_1,\dots,x_c$ in $I$.
Setting $N=M/\xx M$, we see from \cite[Proposition 2.2]{stcm} that $M$ is isomorphic to a direct summand of $\syz^cN$.
The condition (3) is thus satisfied, and it is observed that $N$ is a CM $R$-module with $\codim N=d-\dim N=c$.

Now it remains to verify that $N$ satisfies the condition (2).
Fix a prime ideal $\p$ in the union $\ipd(N)\cup\nf(M)$.
Then it is easily observed that $\p$ contains the sequence $\xx$.
Hence by \cite[Exercise 1.3.6]{BH} the equalities
$$
\pd_{R_{\p}} N_{\p}
=\pd_{R_{\p}} M_{\p}/\xx M_{\p}
=\pd_{R_{\p}} M_{\p}+c
$$
hold.
This shows that the $R_\p$-module $N_\p$ has infinite projective dimension if and only if so does $M_\p$.
Since $M$ is a MCM $R$-module, the Auslander--Buchsbaum formula implies $\ipd(M)=\nf(M)$.
Therefore we obtain $\ipd(N)=\nf(M)$.
\end{proof}

As an immediate consequence of the above proposition, the following holds.

\begin{cor}\label{2208}
Let $M$ be a MCM $R$-module whose nonfree locus has dimension $n$.
Then there exists a CM $R$-module $N$ of dimension $n$ such that $M$ is isomorphic to a direct summand of $\syz^{d-n}N$.
\end{cor}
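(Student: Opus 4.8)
The plan is to obtain this directly from Proposition~\ref{4131834} by choosing the syzygy index as large as the proposition permits. Since $\dim\nf(M)=n$, the definition of codimension gives $\codim\nf(M)=d-\dim\nf(M)=d-n$. Because $\nf(M)\subseteq\spec R$ forces $n\le d$, the number $c:=d-n$ is an integer satisfying $0\le c\le\codim\nf(M)$ (in fact with equality in the upper bound), so Proposition~\ref{4131834} is applicable with this value of $c$.

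Applying the proposition, one gets a CM $R$-module $N$ with $\codim N=c=d-n$ such that $M$ is isomorphic to a direct summand of a $c$-th syzygy of $N$. Since $N$ is CM, $\codim N=d-\dim N$, so $\dim N=d-(d-n)=n$, and a $c$-th syzygy is a $(d-n)$-th syzygy; this is exactly the asserted conclusion. The additional output $\ipd(N)=\nf(M)$ of the proposition is not needed for the corollary, though it could be recorded without extra work.

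There is essentially no obstacle here: the entire content is the bookkeeping that the admissible range $0\le c\le\codim\nf(M)$ in Proposition~\ref{4131834} contains the value $d-n$, which is immediate from the relations $\codim(-)=d-\dim(-)$ recorded in the preceding Definition. The only point worth a brief remark is the trivial case where $\nf(M)$ is empty — i.e.\ $M$ is free — in which case $\dim\nf(M)$ is not a nonnegative integer; but the hypothesis ``$\nf(M)$ has dimension $n$'' already presupposes $n\ge0$ and hence $\nf(M)\neq\emptyset$, so this case does not arise and the argument above goes through verbatim.
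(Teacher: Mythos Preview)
Your argument is correct and follows exactly the paper's approach: compute $\codim\nf(M)=d-n$ and apply Proposition~\ref{4131834} with $c=d-n$. Your extra bookkeeping (verifying $0\le c$, translating $\codim N$ back to $\dim N$, and the remark on the empty locus) is sound but not strictly needed.
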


\begin{proof}
We have $\codim\nf(M)=d-n$.
Apply Proposition \ref{4131834} to $c:=d-n$.
\end{proof}

Applying the above corollary to $n=0$, we obtain the following result, which recovers \cite[Corollary 2.6]{stcm}.

\begin{cor}
Let $M$ be a MCM $R$-module which is locally free on the punctured spectrum of $R$.
Then there exists an $R$-module $N$ of finite length such that $M$ is isomorphic to a direct summand of $\syz^dN$.
\end{cor}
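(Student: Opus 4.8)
The plan is to obtain this as the special case $n=0$ of Corollary~\ref{2208}. First I would identify the nonfree locus: since $M$ is locally free on the punctured spectrum of $R$, the module $M_\p$ is free for every prime $\p\neq\mathfrak m$, so $\nf(M)\subseteq\{\mathfrak m\}$ and in particular $\dim\nf(M)\le 0$.

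If $\nf(M)=\{\mathfrak m\}$, then $\dim\nf(M)=0$, and Corollary~\ref{2208} applied with $n=0$ yields a Cohen--Macaulay $R$-module $N$ with $\dim N=0$ --- equivalently, a module of finite length --- such that $M$ is isomorphic to a direct summand of $\syz^{d-0}N=\syz^{d}N$, which is exactly the claim.

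The only remaining case is $\nf(M)=\emptyset$, i.e.\ $M\cong R^{\oplus r}$ is free; here Corollary~\ref{2208} is not literally applicable, since the dimension of the nonfree locus is not $0$, so I would dispatch it directly. Choose a system of parameters $x_1,\dots,x_d$ of $R$; as $R$ is Cohen--Macaulay this is an $R$-sequence, and the Koszul complex on it is a free resolution of $R/(x_1,\dots,x_d)$ of length exactly $d$, whence $\syz^{d}(R/(x_1,\dots,x_d))\cong R$. Taking $N=(R/(x_1,\dots,x_d))^{\oplus r}$, which has finite length, we get $\syz^{d}N\cong R^{\oplus r}\cong M$, so $M$ is (trivially) a direct summand of $\syz^{d}N$.

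I do not expect any genuine obstacle: all the substance already sits in Corollary~\ref{2208}, and hence in Proposition~\ref{4131834}; the free case is the only thing that needs a separate, elementary remark. Since every MCM module that is locally free on the punctured spectrum is covered, this in particular recovers \cite[Corollary 2.6]{stcm}.
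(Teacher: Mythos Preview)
Your proof is correct and follows the same route as the paper, which simply states that the result is obtained by applying Corollary~\ref{2208} with $n=0$. Your separate treatment of the free case is a welcome bit of care that the paper glosses over.
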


Next we establish a criterion for $R$ to satisfy Serre's condition $(\R_n)$ in terms of the codimensions of the nonfree loci of MCM $R$-modules.

\begin{prop}\label{4131835}
The following are equivalent for each $0\le c\le d$.
\begin{enumerate}[\rm(1)]
\item 
The ring $R$ satisfies $(\R_{c-1})$.
\item
One has $\codim\sing R\ge c$.
\item
One has $\codim\nf(M)\ge c$ for all MCM $R$-modules $M$.
\end{enumerate}
\end{prop}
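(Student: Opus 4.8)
The plan is to prove the implications $(1)\Leftrightarrow(2)$ and $(2)\Leftrightarrow(3)$; since all three conditions hold trivially when $c=0$, assume $c\ge1$.

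For $(1)\Leftrightarrow(2)$ I would use only that $R$ is Cohen--Macaulay, so that $\height\p+\dim R/\p=d$ for every prime $\p$. Hence a prime satisfies $\height\p\le c-1$ precisely when $\dim R/\p\ge d-c+1$. Since $\sing R$ is a closed subset of $\spec R$, its dimension is attained at a prime minimal in $\sing R$; consequently $\sing R$ contains a prime of height at most $c-1$ if and only if $\dim\sing R\ge d-c+1$, i.e. if and only if $\codim\sing R\le c-1$. By the very definitions, $R$ fails $(\R_{c-1})$ exactly when $\sing R$ contains a prime of height at most $c-1$; negating this chain of equivalences gives $(1)\Leftrightarrow(2)$.

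The implication $(2)\Rightarrow(3)$ is immediate, since for a MCM module $M$ one has $\nf(M)\subseteq\sing R$, whence $\codim\nf(M)\ge\codim\sing R\ge c$. The substance lies in $(3)\Rightarrow(2)$, for which I would produce a single MCM module whose nonfree locus is as large as $\sing R$. We may assume $\sing R\ne\emptyset$, since otherwise $\codim\sing R=\infty\ge c$. Choose a prime $\p$ that is minimal in the closed set $\sing R$ and satisfies $\dim R/\p=\dim\sing R$; then $R_\p$ is not regular. Put $M=\syz^d(R/\p)$. As $R$ is Cohen--Macaulay of dimension $d$, any $d$-th syzygy module is MCM --- apply the depth lemma inductively to the short exact sequences $0\to\syz^{i+1}(R/\p)\to F_i\to\syz^i(R/\p)\to0$ arising from a free resolution --- so $M$ is MCM. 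Localizing that resolution at $\p$ yields a free resolution of $(R/\p)_\p$ over $R_\p$ whose $d$-th syzygy is $M_\p$; since syzygies are unique up to free summands, $M_\p$ is, up to a free summand, a $d$-th syzygy over $R_\p$ of the residue field of $R_\p$. Because $R_\p$ is not regular, that residue field has infinite projective dimension, so this $d$-th syzygy, and with it $M_\p$, is not free; thus $\p\in\nf(M)$. As $\nf(M)$ is closed, $\v(\p)\subseteq\nf(M)\subseteq\sing R$, forcing $\dim\nf(M)=\dim R/\p=\dim\sing R$ and hence $\codim\nf(M)=\codim\sing R$; applying $(3)$ to $M$ gives $\codim\sing R\ge c$.

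The main obstacle is the construction in $(3)\Rightarrow(2)$: one has to exhibit a genuine MCM module that detects the singular locus. The two facts that make this go through are that high syzygies over a Cohen--Macaulay ring of the appropriate dimension are MCM, and that localization commutes with the formation of syzygies up to free summands, so that the non-regularity of $R_\p$ becomes visible in $M_\p$.
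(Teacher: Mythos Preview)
Your proof is correct and essentially matches the paper's: both arguments hinge on forming the MCM module $M=\syz^d(R/\p)$ for a suitable prime $\p$ and observing, via localization, that $M_\p$ is nonfree precisely when $R_\p$ is singular. The only difference is organizational---you establish $(1)\Leftrightarrow(2)$ and then $(3)\Rightarrow(2)$ by choosing $\p$ minimal in $\sing R$, whereas the paper closes the cycle via $(3)\Rightarrow(1)$ by choosing $\p$ of height at most $c-1$---but the substance is identical.
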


\begin{proof}
(1) $\Rightarrow$ (2): Let $\p$ be a prime ideal in $\sing R$.
As $R$ satisfies $(\R_{c-1})$, the height of $\p$ is at least $c$, whence $\dim R/\p\le d-c$.
Therefore $\dim\sing R\le d-c$, which means that $\sing R$ has codimension at least $c$.

(2) $\Rightarrow$ (3): Since $\nf(M)$ is contained in $\sing R$, we have $\dim\nf(M)\le\dim\sing R$.
Hence the (in)equalities
$$
\codim\nf(M)=d-\dim\nf(M)\ge d-\dim\sing R=\codim\sing R\ge c
$$
follow.

(3) $\Rightarrow$ (1): Let $\p$ be a prime ideal of $R$ with $\height\p\le c-1$.
Let $M$ be a $d$-th syzygy of the $R$-module $R/\p$.
Then $M$ is a MCM $R$-module, and by assumption we have $\codim\nf(M)\ge c$, or equivalently,
$$
\dim\nf(M)\le d-c.
$$
Suppose that $R_{\p}$ is not regular.
Then the $R_\p$-module $M_{\p}$ is not free, for it is a $d$-th syzygy of the $R_\p$-module $\kappa(\p)$.
Hence $\p$ belongs to $\nf(M)$, and there are inequalities
$$
\dim\nf(M)\ge \dim R/\p\ge d-c+1.
$$
This contradiction shows that $R_\p$ is regular.
\end{proof}

Let us now state and prove the main result of this section, which characterizes CM local rings satisfying Serre's $(\R_n)$-condition.

\begin{thm}\label{4131833}
For every integer $0\le c\le d$ the following are equivalent.
\begin{enumerate}[\rm(1)]
\item
The ring $R$ satisfies $(\R_{c-1})$.
\item
Every MCM $R$-module is isomorphic to a direct summand of a $c$-th syzygy of a CM $R$-module of codimension $c$.
%\item
%Every MCM $R$-module is isomorphic to a direct summand of a $c$-th syzygy of an $R$-module of codimension $c$.
\item
Every MCM $R$-module is isomorphic to a direct summand of some syzygy of an $R$-module of codimension at least $c$.
\end{enumerate}
\end{thm}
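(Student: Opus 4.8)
The plan is to prove the cyclic chain of implications $(1)\Rightarrow(2)\Rightarrow(3)\Rightarrow(1)$. The middle implication requires no work: a $c$-th syzygy is in particular \emph{some} syzygy, and a CM module of codimension $c$ has codimension at least $c$, so $(2)$ is literally a special case of $(3)$.

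For $(1)\Rightarrow(2)$ I would splice together the two preceding propositions. Assume $R$ satisfies $(\R_{c-1})$ and let $M$ be a MCM $R$-module. By Proposition \ref{4131835} the hypothesis is equivalent to the inequality $\codim\nf(M)\ge c$ holding for every MCM module, so in particular the integer $c$ falls in the range $0\le c\le\codim\nf(M)$ to which Proposition \ref{4131834} applies. That proposition then supplies a CM $R$-module $N$ with $\codim N=c$ such that $M$ is isomorphic to a direct summand of a $c$-th syzygy of $N$, which is exactly assertion $(2)$. (The additional conclusion $\ipd(N)=\nf(M)$ furnished by Proposition \ref{4131834} is not needed here.)

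The implication $(3)\Rightarrow(1)$ is where the actual content lies. The plan is to verify condition $(3)$ of Proposition \ref{4131835} --- namely that $\codim\nf(M)\ge c$ for every MCM $R$-module $M$ --- and then invoke that proposition to conclude that $R$ satisfies $(\R_{c-1})$. So fix a MCM $R$-module $M$; by hypothesis $M$ is isomorphic to a direct summand of $\syz^tN$ for some $t\ge0$ and some $R$-module $N$ with $\codim N\ge c$. The key step is the inclusion $\nf(M)\subseteq\supp N$. To prove it, take a prime $\p\notin\supp N$, so $N_\p=0$; since localization is exact it carries a free resolution of $N$ to a free resolution of the zero $R_\p$-module, so $(\syz^tN)_\p$ is a $t$-th syzygy of $0$ over $R_\p$ and hence, $t$-th syzygies being unique up to free direct summands, a direct summand of a free $R_\p$-module, thus free. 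Since $M_\p$ is a direct summand of $(\syz^tN)_\p$, it too is free, so $\p\notin\nf(M)$. Granted the inclusion, we get $\dim\nf(M)\le\dim\supp N=\dim N$ and therefore $\codim\nf(M)=d-\dim\nf(M)\ge d-\dim N=\codim N\ge c$, as wanted.

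I do not anticipate a serious obstacle: the theorem is essentially a repackaging of Propositions \ref{4131834} and \ref{4131835}, and the only genuinely new ingredient is the inclusion $\nf(M)\subseteq\supp N$ used for $(3)\Rightarrow(1)$. The one point demanding a little care there is the bookkeeping with syzygies: that $\syz^tN$ is only defined up to free summands and that its formation commutes with localization, so that restricting to a prime outside $\supp N$ yields a (possibly non-minimal, but still free) syzygy of the zero module. Everything beyond that is the routine dimension count above.
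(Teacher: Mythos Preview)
Your proof is correct and follows essentially the same approach as the paper: both derive $(1)\Rightarrow(2)$ by combining Propositions \ref{4131834} and \ref{4131835}, note $(2)\Rightarrow(3)$ is trivial, and prove $(3)\Rightarrow(1)$ via the inclusion $\nf(M)\subseteq\supp N$ (the paper phrases this as $\nf(M)\subseteq\nf(\syz^bN)\subseteq\supp N$) followed by the same dimension count and an appeal to Proposition \ref{4131835}.
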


\begin{proof}
Propositions \ref{4131834} and \ref{4131835} show that (1) implies (2), and it is obvious that (2) implies (3).
Assume that (3) holds, and take any MCM $R$-module $M$.
By assumption, there are an $R$-module $N$ with $\codim N\ge c$ and an integer $b\ge0$ such that $M$ is isomorphic to a direct summand of $\syz^bN$.
Then we have inclusions $\nf(M)\subseteq\nf(\syz^bN)\subseteq\supp N$ of closed subsets of $\spec R$, which implies
$$
\dim\nf(M)\le \dim\nf(\syz^bN)\le \dim\supp N=\dim N\le d-c.
$$
Hence $\nf(M)$ has codimension at least $c$, and it is deduced from Proposition \ref{4131835} that $R$ satisfies $(\R_{c-1})$.
\end{proof}

%%%%%%%%%%%%%%%%%%%%%%%%%%%%%%%%%%%%%%%%%%%%%%%%%%%%%%%%
\section{MCM modules that are direct summands of MCM approximations}

Throughout this section, our ring $R$ is further assumed to be Gorenstein.
The following is a celebrated result due to Auslander and Buchweitz \cite[Theorem 1.8]{AB}.

\begin{thm}[Auslander--Buchweitz]
For each $R$-module $M$ there exists an exact sequence
\begin{equation}\label{4131613}
0 \to Y \to X \to M \to 0
\end{equation}
of $R$-modules such that $X$ is MCM and $Y$ has finite projective dimension.
\end{thm}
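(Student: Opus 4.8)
The plan is to construct the desired exact sequence by induction on the Cohen--Macaulay defect $\delta(M):=\dim R-\depth M$, a nonnegative integer which vanishes precisely when $M$ is MCM or zero. The single place where the Gorenstein hypothesis will be used is the following \emph{cogenerator property}: every MCM $R$-module $X$ fits into a short exact sequence $0\to X\to W\to X'\to 0$ with $W$ free and $X'$ again MCM. To see this, recall that over a Gorenstein local ring an $R$-module $X$ is MCM if and only if $\Ext_R^i(X,R)=0$ for all $i>0$, and that in this case $X^\ast:=\Hom_R(X,R)$ is again MCM, the biduality map $X\to X^{\ast\ast}$ is an isomorphism, and $\Ext_R^1(X^\ast,R)=0$. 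Choosing a free module $W^\ast$ surjecting onto $X^\ast$ with kernel $Z$, the module $Z$ is a syzygy of the MCM module $X^\ast$, hence MCM; applying $\Hom_R(-,R)$ to $0\to Z\to W^\ast\to X^\ast\to 0$ and using $\Ext_R^1(X^\ast,R)=0$ then yields the required sequence $0\to X\to W\to Z^\ast\to 0$, since $Z^\ast$ is MCM.

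For the base case $\delta(M)=0$ we take $X=M$ and $Y=0$. For the inductive step assume $\delta(M)\ge 1$, so that $\depth M<\dim R$. Pick a free module $Q_0$ together with a surjection $f_0\colon Q_0\to M$, and set $M':=\ker f_0$, giving a short exact sequence $0\to M'\xrightarrow{f}Q_0\xrightarrow{f_0}M\to 0$. Since $\depth M<\dim R=\depth Q_0$, the depth lemma gives $\depth M'=\depth M+1$, so $\delta(M')=\delta(M)-1$, and the inductive hypothesis furnishes an exact sequence $0\to Y'\to X'\xrightarrow{p}M'\to 0$ with $X'$ MCM and $\pd_R Y'<\infty$.

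The key step is the following. Apply the cogenerator property to $X'$ to get $0\to X'\xrightarrow{j}W\to X''\to 0$ with $W$ free and $X''$ MCM, and let $P$ be the pushout of the two maps $X'\xrightarrow{f\circ p}Q_0$ and $X'\xrightarrow{j}W$ out of $X'$. Because $j$ is injective, the pushout produces a short exact sequence $0\to Q_0\to P\to X''\to 0$, whence $P$ is MCM, MCM modules being closed under extensions by the depth lemma. On the other hand, $f\circ p$ has image $f(M')=\ker f_0$ and kernel $\ker p=Y'$; consequently the map $P\to M$ induced by $f_0$ and the zero map $W\to M$ is surjective, with kernel identified, through the pushout relations, with $W/j(Y')$, which has finite projective dimension as a quotient of the free module $W$ by a submodule of finite projective dimension. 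Thus $0\to W/j(Y')\to P\to M\to 0$ is the required sequence, and the induction is complete.

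The main obstacle is the cogenerator property --- realizing an arbitrary MCM module as a submodule of a free module with MCM cokernel. This is exactly where the Gorenstein hypothesis enters, through the fact that MCM modules over a Gorenstein ring have Gorenstein dimension zero, so that $\Hom_R(-,R)$ behaves well on them; everything else is formal. The crux of the argument is the single pushout in the inductive step, which simultaneously straightens the syzygy sequence of $M$ into an MCM extension and absorbs the infinite projective dimension of $M'$ into the free module $W$.
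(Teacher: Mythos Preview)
The paper does not give its own proof of this statement; it simply records it as a known result of Auslander and Buchweitz, citing \cite[Theorem 1.8]{AB}. So there is no proof in the paper to compare against.

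That said, your argument is correct and is essentially the classical Auslander--Buchweitz induction, specialized to the Gorenstein setting. The verification that the pushout $P$ is MCM (via the short exact sequence $0\to Q_0\to P\to X''\to 0$) and that the induced map $P\to M$ has kernel $W/j(Y')$ of finite projective dimension is accurate: from the pushout description one checks that $W\to P\to M\to 0$ is exact with $\ker(W\to P)=j(Y')\cong Y'$, and then the short exact sequence $0\to Y'\to W\to W/j(Y')\to 0$ forces $\pd_R(W/j(Y'))<\infty$. Your derivation of the cogenerator property from the total reflexivity of MCM modules over a Gorenstein ring is also clean and correct. In the general Cohen--Macaulay case with a canonical module $\omega$, the same induction works with $\Hom_R(-,\omega)$ in place of $\Hom_R(-,R)$ and ``finite injective dimension'' in place of ``finite projective dimension''; here those coincide because $R$ is Gorenstein, which is exactly why your argument goes through.
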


\begin{dfn}
A MCM $R$-module $X$ admitting an exact sequence of the form \eqref{4131613} is called a {\em MCM approximation} of $M$.
\end{dfn}

For an $R$-module $M$ we denote by $\tr M$ the {\em (Auslander) transpose} of $M$, that is, the cokernel of the $R$-dual of the first differential map in a free resolution of $M$.
We denote by $\lcm(R)$ the {\em stable category of MCM $R$-modules}.
This is defined as follows: the objects of $\lcm(R)$ are precisely the MCM $R$-modules, and the hom-set $\Hom_{\lcm(R)}(M,N)$ of objects $M,N$ in $\lcm(R)$ is the quotient of $\Hom_R(M,N)$ by the $R$-submodule consisting of homomorphisms factoring through free $R$-modules.
Since $R$ is assumed to be Gorenstein, $\lcm(R)$ is a triangulated category, and taking a syzygy and a transpose defines an autoequivalence and a duality of $\lcm(R)$, respectively.
\begin{align*}
\syz:&\lcm(R)\xrightarrow{\cong}\lcm(R),\\
\tr:&\lcm(R)\xrightarrow{\cong}\lcm(R)^{\mathsf{op}}.
\end{align*}
For details, we refer the reader to \cite{ABr} and \cite{B}.

One can describe a MCM approximation by using syzygies and transposes:

\begin{lem}\label{4131913}
For any $R$-module $M$, the $R$-module
$$
\tr\syz^n\tr\syz^nM
$$
is a MCM approximation of $M$ for all $n\ge d-\depth M$.
\end{lem}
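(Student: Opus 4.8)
The plan is to first reduce to the case where the iterated syzygy is maximal Cohen--Macaulay, and then to identify $\tr\syz^n\tr\syz^nM$ with the module that the Auslander--Buchweitz construction produces from a complete resolution of $M$. I begin by checking that $W:=\syz^nM$ is MCM whenever $n\ge d-\depth M$. This is a depth count: applying the depth lemma to the short exact sequences $0\to\syz^{i+1}M\to F_i\to\syz^iM\to0$ associated to a free resolution $F_\bullet$ of $M$ gives $\depth\syz^nM\ge\min\{d,\depth M+n\}$, while $\depth\syz^nM\le d$ because $\syz^nM$ embeds into a finitely generated free module; hence $\syz^nM$ has depth $d$, i.e.\ is MCM, once $n\ge d-\depth M$. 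Since $R$ is Gorenstein, applying $\syz$ or $\tr$ keeps a module MCM (these operations restrict to the autoequivalence and the duality of $\lcm(R)$ recalled above), so $X:=\tr\syz^n\tr\syz^nM=\tr\syz^n\tr W$ is automatically MCM; what remains is to produce an exact sequence $0\to Y\to X\to M\to0$ with $\pd_RY<\infty$.

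To produce it, I would build a complete resolution of $M$. Because $R$ is Gorenstein and $W$ is MCM, $W$ is totally reflexive, so $W^{*}:=\Hom_R(W,R)$ is MCM and $\Ext^i_R(W,R)=\Ext^i_R(W^{*},R)=0$ for $i>0$. Dualizing a free resolution $\cdots\to Q_1\to Q_0\to W^{*}\to0$ thus produces an exact sequence $0\to W\to Q_0^{*}\to Q_1^{*}\to\cdots$; splicing this along $W$ with the free resolution $\cdots\to F_{n+1}\to F_n\to W\to0$ of $W$ (the $n$-fold shift of $F_\bullet$) yields an acyclic complex of free modules which, after re-indexing so as to agree with $F_\bullet$ in all sufficiently large degrees, is a complete resolution $\mathbb T_\bullet$ of $M$. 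By the Auslander--Buchweitz construction, the cokernel of its degree-$1$ differential, together with the map to $M$ induced by the comparison map $\mathbb T_\bullet\to F_\bullet$ which is the identity in large degrees, is an MCM approximation of $M$. Unwinding the splicing, this cokernel equals $\tr\bigl(\syz^{n-2}W^{*}\bigr)$ (with $\syz^{n-2}$ read as the appropriate cosyzygy when $n<2$); and since $\Ext^{>0}_R(W,R)=0$, the second syzygy of $\tr W=\operatorname{coker}(F_n^{*}\to F_{n+1}^{*})$ is $W^{*}$, whence $\syz^n\tr W\cong\syz^{n-2}W^{*}$ and therefore $\tr(\syz^{n-2}W^{*})\cong\tr\syz^n\tr W=X$ up to free summands, as is customary for MCM approximations. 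This gives the lemma.

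I expect the main obstacle to be the last paragraph: the bookkeeping of the degree shifts, both in the splicing and in the syzygy--transpose identification, together with the verification that $\mathbb T_\bullet$ is genuinely acyclic -- which is exactly where Gorensteinness enters, through $\Ext^{>0}_R(W,R)=0=\Ext^{>0}_R(W^{*},R)$ -- and the (standard) fact that the comparison map $X\to M$ is onto with kernel of finite projective dimension. A more conceptual route, at the cost of invoking more of \cite{ABr} and \cite{B}, bypasses complete resolutions: inside $\lcm(R)$ one has $\tr\tr\cong\mathrm{id}$ and $\tr\syz\cong\syz^{-1}\tr$ -- the latter coming from the natural exact sequence $0\to\Ext^1_R(L,R)\to\tr L\to\syz\tr\syz L\to0$, whose left-hand term vanishes when $L$ is MCM -- so that $\tr\syz^n\tr\syz^nM\cong\syz^{-n}\syz^nM$; and $\syz^{-n}\syz^nM$ is the MCM approximation of $M$ because forming an MCM approximation commutes with $\syz$ up to free summands and is the identity on MCM modules.
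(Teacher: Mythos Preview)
Your proposal is correct and considerably more explicit than the paper's own proof, which is essentially a two-line citation: after observing that $\syz^nM$ is MCM and that $\syz,\tr$ preserve the MCM property, the paper invokes \cite[Proposition~(2.21) and Corollary~(4.22)]{ABr} to obtain directly an exact sequence $0\to K\to \tr\syz^n\tr\syz^nM\to M\to 0$ with $\pd_RK\le n-1$. No complete resolutions, no stable-category manipulations---just Auslander--Bridger as a black box.

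Your two routes unpack what that citation is doing. The complete-resolution argument reconstructs the Auslander--Bridger sequence by hand; the bookkeeping you flag (degree shifts, surjectivity of the comparison map, finiteness of $\pd$ of its kernel) is exactly what \cite{ABr} absorbs. Your stable-category argument is the most illuminating: the identity $\tr\syz^n\tr\cong\syz^{-n}$ on $\lcm(R)$, together with the fact that MCM approximation commutes with $\syz$ and is the identity on MCM modules, explains \emph{why} the formula $\tr\syz^n\tr\syz^n$ is the right one. The only caveat is that this route yields the conclusion up to free summands, i.e.\ an isomorphism in $\lcm(R)$ rather than a literal short exact sequence; since $\tr$ and $\syz$ are themselves only defined up to free summands and MCM approximations are closed under adding free summands, this is adequate for every use the paper makes of the lemma, but if you want the exact sequence on the nose you are pushed back to your first route or to the Auslander--Bridger reference. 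In short: the paper buys brevity by citation; your approach buys transparency at the cost of the bookkeeping you already anticipated.
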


\begin{proof}
Note that $\syz^nM$ is a MCM $R$-module.
Since both $\syz$ and $\tr$ preserve the MCM property, the $R$-module $X=\tr\syz^n\tr(\syz^nM)$ is also a MCM module.
It follows from \cite[Proposition (2.21) and Corollary (4.22)]{ABr} that there exists an exact sequence
\begin{equation}\label{4131855}
0 \to K \to X \to M \to 0
\end{equation}
of $R$-modules such that $K$ has projective dimension at most $n-1$.
Consequently, $X$ is a MCM approximation of $M$.
\end{proof}

A MCM approximation version of Corollary \ref{2208} also holds true:

\begin{prop}
\begin{enumerate}[\rm(1)]
\item
Let $M$ be a MCM $R$-module with $n$-dimensional nonfree locus.
Then there exists an $n$-dimensional CM $R$-module $N$ such that $M$ is isomorphic to a direct summand of a MCM approximation of $N$.
\item
Let $M$ be a MCM $R$-module that is locally free on the punctured spectrum of $R$.
Then there exists an $R$-module $N$ of finite length such that $M$ is isomorphic to a direct summand of a MCM approximation of $N$.
\end{enumerate}
\end{prop}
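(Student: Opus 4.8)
The plan is to deduce (1) from Proposition \ref{4131834} and Lemma \ref{4131913}, the essential trick being to apply the former not to $M$ itself but to a sufficiently high syzygy of $M$. Put $m:=d-n$; since $\dim\nf(M)=n$ we have $\codim\nf(M)=m$. First I would pass to the MCM module $L:=\syz^mM$. Because $L_\p$ is free whenever $M_\p$ is, we get $\nf(L)\subseteq\nf(M)$, so still $\codim\nf(L)\ge m$; applying Proposition \ref{4131834} to $L$ with $c:=m$ then produces a CM $R$-module $N$ with $\dim N=n$ such that $\syz^mM=L$ is isomorphic to a direct summand of $\syz^mN$. Say $\syz^mN\cong\syz^mM\oplus W$ with $W$ a MCM $R$-module.

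Next I would feed $N$ into Lemma \ref{4131913}. As $N$ is CM of dimension $n$, we have $\depth N=n=d-m$, so $X:=\tr\syz^m\tr\syz^mN$ is a MCM approximation of $N$. Since $\tr$ and $\syz$ carry finite direct sums to finite direct sums up to free summands,
$$
X\cong\tr\syz^m\tr\syz^mM\oplus\tr\syz^m\tr\syz^mW
$$
up to free summands.

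The crux of the argument is the identification of $\tr\syz^m\tr\syz^mM$ with $M$ modulo a free summand. Applying Lemma \ref{4131913} to $M$ (legitimate since $m\ge0=d-\depth M$), this module is a MCM approximation of $M$; but $M$ is itself MCM, and any MCM approximation of a MCM module is the module plus a free summand. Indeed, in a defining sequence $0\to Y\to\tr\syz^m\tr\syz^mM\to M\to0$ with $Y$ of finite projective dimension, the third map splits because $\Ext_R^1(M,Y)=0$ (as $M$ is MCM over the Gorenstein ring $R$, the functors $\Ext_R^{>0}(M,-)$ vanish on finite-projective-dimension modules), so $Y$ is a direct summand of a MCM module, hence MCM, hence free by the Auslander--Buchsbaum formula. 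Thus $M$ is a direct summand of $X$ up to free summands, i.e.\ of $X\oplus F$ for a suitable free $R$-module $F$, and $X\oplus F$ is again a MCM approximation of $N$ (replace the finite-projective-dimension kernel $Y'$ of $X\to N$ by $Y'\oplus F$). This proves (1).

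For (2), when $M$ is not free one has $\nf(M)=\{\mathfrak m\}$, which is $0$-dimensional, so (1) with $n=0$ gives a CM module $N$ of dimension $0$, i.e.\ of finite length; the free case is immediate since a free module is a MCM approximation of $R$ modulo a system of parameters of $R$. I expect the only genuinely subtle points to be the two I have flagged: the decision to apply Proposition \ref{4131834} to $\syz^{d-n}M$ rather than to $M$ (applying it to $M$ would yield $\syz^{-m}M$, not $M$, as a summand of the resulting MCM approximation), and the verification that $\tr\syz^m\tr\syz^mM\cong M$ up to free summands; the remaining steps are routine bookkeeping with free summands.
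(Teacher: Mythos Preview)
Your proof is correct and follows essentially the same route as the paper: apply Proposition~\ref{4131834} (the paper phrases it via Corollary~\ref{2208}) to $\syz^{d-n}M$ rather than to $M$, then undo the syzygy by applying $\tr\syz^{d-n}\tr$ and invoke Lemma~\ref{4131913}. The only difference worth noting is in how the identification $\tr\syz^m\tr\syz^mM\cong M$ up to free summands is justified: the paper simply appeals to the fact that $\syz$ and $\tr$ are an autoequivalence and a duality of $\lcm(R)$, whereas you give a direct argument via Lemma~\ref{4131913} and the splitting of the resulting approximation sequence---more self-contained, but amounting to the same thing.
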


\begin{proof}
(1) It is easy to see that $\nf(\syz^{d-n}M)$ coincides with $\nf(M)$.
Applying Corollary \ref{2208} to the MCM module $\syz^{d-n}M$, we find a CM module $N$ of dimension $n$ such that $\syz^{d-n}M$ is isomorphic to a direct summand of $\syz^{d-n}N$.
Taking $\tr\syz^{d-n}\tr$ yields that $M$ is isomorphic to a direct summand of
$$
X:=\tr\syz^{d-n}\tr\syz^{d-n}N\oplus F
$$
for some free $R$-module $F$.
Using Lemma \ref{4131913}, we easily see that $X$ is a MCM approximation of $N$.

(2) The assertion follows from applying (1) to $n=0$.
\end{proof}

The main result of this section is the following characterization of Gorenstein local rings satisfying Serre's condition $(\R_n)$.
This result can be viewed as a MCM approximation version of Theorem \ref{4131833}.

\begin{thm}\label{4140050}
The following are equivalent for each $0\le c\le d$.
\begin{enumerate}[\rm(1)]
\item
$R$ satisfies $(\R_{c-1})$.
\item
Every MCM $R$-module is isomorphic to a direct summand of a MCM approximation of a CM $R$-module of codimension $c$.
\item
Every MCM $R$-module is isomorphic to a direct summand of a MCM approximation of an $R$-module of codimension at least $c$.
\end{enumerate}
\end{thm}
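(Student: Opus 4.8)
The plan is to run the cycle of implications $(1)\Rightarrow(2)\Rightarrow(3)\Rightarrow(1)$, mirroring the proof of Theorem \ref{4131833} and converting syzygies into MCM approximations by means of Lemma \ref{4131913}.

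For $(1)\Rightarrow(2)$, I would start from a MCM $R$-module $M$ and note that $\syz^cM$ is again MCM. Since $R$ satisfies $(\R_{c-1})$, Theorem \ref{4131833} provides a CM $R$-module $N$ of codimension $c$ such that $\syz^cM$ is isomorphic to a direct summand of $\syz^cN$. Working now in the stable category $\lcm(R)$ and applying the functor $\tr\syz^c\tr$, which is a composite of the autoequivalence $\syz^c$ and two copies of the duality $\tr$ and hence preserves the relation ``is a direct summand of'', I obtain that $\tr\syz^c\tr\syz^cM$ is a direct summand of $\tr\syz^c\tr\syz^cN$ in $\lcm(R)$, and therefore that $\tr\syz^c\tr\syz^cM$ is a direct summand of $X:=\tr\syz^c\tr\syz^cN\oplus F$ over $R$ for a suitable free module $F$. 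Two applications of Lemma \ref{4131913} then close the argument: because $\depth M=d$ we have $c\ge d-\depth M$, so $\tr\syz^c\tr\syz^cM$ is a MCM approximation of $M$, hence coincides with $M$ up to a free summand, so $M$ itself is a direct summand of $X$ after enlarging $F$; and because $N$ is CM of codimension $c$ we have $\depth N=\dim N=d-c$, i.e.\ $c=d-\depth N$, so $\tr\syz^c\tr\syz^cN$, and hence $X$, is a MCM approximation of $N$. Thus $M$ is a direct summand of a MCM approximation of the codimension-$c$ CM module $N$, which is $(2)$.

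The implication $(2)\Rightarrow(3)$ is immediate, since a module of codimension $c$ has codimension at least $c$. For $(3)\Rightarrow(1)$, I would take an arbitrary MCM $R$-module $M$; by $(3)$ it is a direct summand of a MCM approximation $X$ of some $R$-module $N$ with $\codim N\ge c$, so there is an exact sequence $0\to Y\to X\to N\to0$ with $X$ MCM and $\pd_RY<\infty$. The key step is to show $\nf(M)\subseteq\supp N$. If $\p$ is a prime not in $\supp N$, then localizing the sequence gives $X_\p\cong Y_\p$, which has finite projective dimension over $R_\p$; on the other hand $X_\p$ is a MCM $R_\p$-module, because over the Gorenstein ring $R$ a module $X$ is MCM precisely when $\Ext^i_R(X,R)=0$ for all $i>0$, a condition that localizes (and $R_\p$ is again Gorenstein). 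The Auslander--Buchsbaum formula then forces $X_\p$, hence its direct summand $M_\p$, to be free. Consequently $\dim\nf(M)\le\dim\supp N=\dim N\le d-c$, that is, $\codim\nf(M)\ge c$; as $M$ was arbitrary, Proposition \ref{4131835} gives that $R$ satisfies $(\R_{c-1})$.

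The place where care is needed is the stable-category bookkeeping in $(1)\Rightarrow(2)$: one applies Theorem \ref{4131833} to $\syz^cM$ rather than to $M$, precisely so that after applying $\tr\syz^c\tr$ the ``balanced'' operator $\tr\syz^c\tr\syz^c(-)$ that Lemma \ref{4131913} recognizes as a MCM approximation sits on $M$ and on $N$ simultaneously, and one must absorb the unavoidable free summands, using that a MCM approximation of a MCM module is that module up to a free summand and that adding a free module to a MCM approximation again yields a MCM approximation. Granting these routine points, the theorem follows by assembling Theorem \ref{4131833}, Lemma \ref{4131913}, and Proposition \ref{4131835}.
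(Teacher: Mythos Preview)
Your proof is correct. For $(1)\Rightarrow(2)$ and $(2)\Rightarrow(3)$ you follow essentially the same route as the paper, with only the cosmetic difference that you justify $\tr\syz^c\tr\syz^cM\cong M$ up to free summands by a second appeal to Lemma~\ref{4131913} (a MCM approximation of a MCM module is that module up to free summands) rather than by invoking the duality $\tr\syz^c$ directly.

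For $(3)\Rightarrow(1)$, however, you take a genuinely different and more elementary path. The paper argues functorially: it replaces $M$ by $N=\tr\syz^d\tr M$, applies $(3)$ to $N$ to obtain a MCM approximation $X$ of some $L$ with $\codim L\ge c$, invokes the uniqueness of MCM approximations \cite[Theorem~B]{AB} together with Lemma~\ref{4131913} to identify $X$ with $\tr\syz^d\tr\syz^dL$ in $\lcm(R)$, and then undoes $\tr\syz^d\tr$ to exhibit $M$ as a direct summand of $\syz^dL$, reducing to Theorem~\ref{4131833}. You instead localize the approximation sequence $0\to Y\to X\to N\to0$ away from $\supp N$ to show directly that $\nf(M)\subseteq\supp N$, and conclude via Proposition~\ref{4131835}. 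Your route avoids both the uniqueness theorem for MCM approximations and any further stable-category bookkeeping; the paper's route has the advantage of reducing cleanly to Theorem~\ref{4131833}, making the parallelism between the two characterization theorems structural rather than incidental.
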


\begin{proof}
(1) $\Rightarrow$ (2): Let $M$ be a MCM $R$-module.
Using Theorem \ref{4131833} for the MCM $R$-module $\syz^cM$, we get a CM $R$-module $N$ of codimension $c$ such that $\syz^cM$ is isomorphic to a direct summand of $\syz^cN$.
Then applying $\tr\syz^c\tr$ to this relation shows that $\tr\syz^c\tr\syz^cM$ is isomorphic to a direct summand of $X:=\tr\syz^c\tr\syz^cN$ up to free summands.
By Lemma \ref{4131913} the module $X$ is a MCM approximation of $N$.
Since we have a duality
$$
\tr\syz^c:\lcm(R)\xrightarrow{\cong}\lcm(R),
$$
the $R$-module $\tr\syz^c\tr\syz^cM$ is isomorphic to $M$ up to free summands.
Therefore $M$ is isomorphic to a direct summand of $X\oplus F$ for some free $R$-module $F$.
It is easy to see that $X\oplus F$ is also a MCM approximation of $N$.

(2) $\Rightarrow$ (3): This implication is obvious.

(3) $\Rightarrow$ (1): Let $M$ be a MCM $R$-module.
Then $N:=\tr\syz^d\tr M$ is also a MCM $R$-module.
Applying the condition (3) to $N$, we observe that there exists an $R$-module $L$ of codimension at least $c$ such that $N$ is isomorphic to a direct summand of a MCM approximation $X$ of $L$.
It follows from \cite[Theorem B]{AB} and Lemma \ref{4131913} that the $R$-module $X$ is isomorphic to $\tr\syz^d\tr\syz^dL$ up to free summands.
The functor
$$
\tr\syz^d\tr:\lcm(R)\xrightarrow{\cong}\lcm(R)
$$
is an equivalence, so we see that $M$ is isomorphic to a direct summand of $\syz^dL$ up to free summands.
Thus Theorem \ref{4131833} implies that $R$ satisfies Serre's condition $(\R_{c-1})$.
\end{proof}

\begin{proof}[Proof of Theorem \ref{4140048}]
The assertion follows by combining Theorems \ref{4131833} and \ref{4140050}.
\end{proof}

\begin{ac}
The authors are grateful to Olgur Celikbas for his helpful comments.
The authors also thank the referee for his/her careful reading.
\end{ac}

%%%%%%%%%%%%%%%%%%%%%%%%%%%%%%%%%%%%%%%%%%%%%%%%%%%%%%%%

%%%%%%%%%%%%%%%%%%%%%%%%%%%%%%%%%%%%%%%%%%%%%%%%%%%%%%%%

\begin{thebibliography}{99}
\bibitem{ABr}
{\sc M. Auslander; M. Bridger}, Stable module theory, {\em Mem. Amer. Math. Soc.} No. 94, {\em American Mathematical Society, Providence, R.I.}, 1969.
\bibitem{AB}
{\sc M. Auslander; R.-O. Buchweitz}, The homological theory of maximal Cohen--Macaulay approximations, Colloque en l'honneur de Pierre Samuel (Orsay, 1987), {\em M\'{e}m. Soc. Math. France (N.S.)} No. 38 (1989), 5--37.
\bibitem{BH}
{\sc W. Bruns; J. Herzog}, Cohen--Macaulay rings, revised edition, Cambridge Studies in Advanced Mathematics, 39, {\it Cambridge University Press, Cambridge}, 1998.
\bibitem{B}
{\sc R.-O. Buchweitz}, Maximal Cohen-Macaulay modules and Tate-cohomology over Gorenstein rings, Preprint (1986), \texttt{http://hdl.handle.net/1807/16682}.
\bibitem{dim}
{\sc H. Dao; R. Takahashi}, The dimension of a subcategory of modules, Preprint (2012), \texttt{arXiv:1203.1955v2}.
\bibitem{K}
{\sc K. Kato}, Syzygies of modules with positive codimension, {\em J. Algebra} {\bf 318} (2007), no. 1, 25--36.
\bibitem{stcm}
{\sc R. Takahashi}, Classifying thick subcategories of the stable category of Cohen--Macaulay modules, {\em Adv. Math.} {\bf 225} (2010), no. 4, 2076--2116.
\end{thebibliography}
\end{document}